\theoremstyle{plain}
\newtheorem{theorem}{Theorem}[section]
\newtheorem{lemma}[theorem]{Lemma}
\numberwithin{equation}{subsection}
\theoremstyle{remark}
\theoremstyle{remark}
\theoremstyle{remark}
\theoremstyle{definition}
\newtheorem{definition}[theorem]{Definition}
\DeclareMathOperator{\supp}{supp}
\newcommand*\Diff{\mathop{}\!\mathrm{d}}
\renewcommand{\div}{\operatorname{div}}
\newcommand{\curl}{\operatorname{curl}}
\newcommand{\dist}{\operatorname{dist}}
\newcommand{\bb}[1]{\mathbb{#1}}
\newcommand\norm[1]{\left\lVert#1\right\rVert}
\newcommand*{\rom}[1]{\expandafter\@slowromancap\romannumeral #1@}
\newcommand\restr[2]{{
  \left.\kern-\nulldelimiterspace 
  #1 
  \vphantom{\big|} 
  \right|_{#2} 
  }}
\title{On the Vanishing Viscosity Limit for Inhomogeneous Incompressible Navier-Stokes Equations on Bounded Domains}
\author{Jens Schröder\footnote{Friedrich-Alexander-Universit\"at Erlangen-N\"urnberg, Department of Mathematics, Cauerstr.~11, 91058 Erlangen, Germany. E-mail: \href{mailto:jens.schroeder@fau.de}{jens.schroeder@fau.de}},\quad Emil Wiedemann\footnote{Friedrich-Alexander-Universit\"at Erlangen-N\"urnberg, Department of Mathematics, Cauerstr.~11, 91058 Erlangen, Germany. E-mail: \href{mailto:emil.wiedemann@fau.de}{emil.wiedemann@fau.de}}}
\begin{document}

\maketitle
\begin{abstract}
   In this paper we study the vanishing viscosity limit for the inhomogeneous incompressible Navier-Stokes equations on bounded domains with no-slip boundary condition in two or three space dimensions. We show that, under suitable assumptions on the density, we can establish the convergence in energy space of Leray-Hopf type solutions of the Navier-Stokes equation to a smooth solution of the Euler equations if and only if the energy dissipation vanishes on a boundary layer with thickness proportional to the viscosity. This extends Kato's criterion for homogeneous Navier-Stokes equations to the inhomogeneous case. We use a new relative energy functional in our proof.
\end{abstract}

\textbf{2020 Mathematics Subject Classification.} 76D05; 76D10

\textbf{Keywords.} Navier-Stokes equations, Euler equations, boundary layer, no-slip boundary condition, relative energy method

\section{Introduction}

When setting the viscosity to zero in the Navier-Stokes equations, one formally obtains the Euler equations that describe the motion of an inviscid fluid. Whether and under which assumptions a sequence of solutions to the Navier-Stokes equations converges in a suitable sense to a solution of the Euler equations on a bounded domain is a very classical problem that was first studied by Prandtl in the beginning of the 20th century, yet many open questions remain to this day.

One of the most famous results concerning this question was given in 1984 by Kato \cite{kato}. In this paper, Kato studied the vanishing viscosity limit on bounded domains with no-slip boundary condition. More precisely, he showed that for incompressible Navier-Stokes equations with constant density, the convergence of Leray-Hopf \cite{leray} solutions to a solution of the Euler equations in energy space is equivalent to the vanishing of the energy dissipation of the viscous flow on a boundary layer with thickness proportional to the viscosity.
Kato's strategy is based on testing the weak formulation for Leray-Hopf solutions against a strong solution of the Euler equations, assuming the latter exists. Because of the mismatch between the respective boundary conditions of the Navier-Stokes (no-slip) and the Euler equations (slip), one has to introduce a suitable corrector function that cuts off the Euler solution close to the boundary. Now the difference between a Leray-Hopf solution and a strong Euler solution can be estimated in energy space by using a relative energy inequality.

Several authors later adopted variants of Kato's strategy to establish further refinements or extensions of Kato's result. Temam and Wang \cite{temam} studied the vanishing viscosity problem for infinite channel domains in $\bb{R}^2$. Wang \cite{wang} showed that instead of working with the full dissipation term $\nu\int|\nabla u^\nu|^2$, it is enough to consider $\nabla_\tau u_\tau^\nu$ or $\nabla_\tau u_n^\nu$ instead of $\nabla u^\nu$ to establish a criterion for the inviscid limit. Here $u^\nu$ denotes the viscous flow with viscosity $\nu$, and $\tau$ and $n$ denote the tangential and the normal component of the flow, respectively. This approach comes at the cost of taking a thicker boundary layer than the Kato boundary layer, whose thickness is proportional to $\nu$. 

Then Kelliher published a series of papers (\cite{kel1,kel2} and others) about the vanishing viscosity limit. His 2007 paper \cite{kel1} gives the equivalence between the vanishing viscosity limit and the vanishing of the dissipation of vorticity on the boundary layer with thickness proportional to the viscosity.

Later Sueur \cite{sueur} extended Kato's result for compressible Navier-Stokes equations by giving a criterion that ensures the convergence in the energy space of weak solutions of compressible Navier-Stokes to strong Euler solutions. Here the strategy is again based on the Kato type corrector function and the use of relative energy estimates established by Feireisl, Jin and Novotn\'y \cite{feireisl}. Recently Sueur's result was further extended to the case of density dependent viscosities \cite{caggio1,caggio2} by Bisconti, Caggio and Dell'Oro. An analogous result to the one of Wang \cite{wang} was given by Wang and Zhu for the compressible equations. In their paper \cite{zhu} they gave a criterion only considering the normal or tangential component albeit working with a thicker boundary layer.

In contrast, the vanishing viscosity limit for inhomogeneous incompressible Navier-Stokes equations has not been studied much. A first result was given by Xi \cite{xi} who extended Wang's result \cite{wang} to the inhomogeneous incompressible case and is using the same boundary layer as \cite{wang}.
In this paper, we use the strategy of the Kato type corrector function on a boundary layer of thickness proportional to $\nu$ and the use of relative energy estimates to give a criterion that characterises, in terms of the energy dissipation, the convergence of weak solutions of inhomogeneous incompressible Navier-Stokes to a strong solution of inhomogeneous incompressible Euler equations (Theorem~\ref{main_th}). In fact, we obtain precisely Kato's criterion. To our knowledge, the relative energy functional in Definition~\ref{relendef} below has not been considered before.

Let us remark that, to date, there is no rigorous example for the conceivable violation of Kato's criterion, neither in the homogeneous nor the inhomogeneous setting. In other words, it can currently not be excluded that Kato's criterion is always satisfied. However, `violent' behaviour of the Euler equations near the boundary is known, such as the breakdown of weak-strong uniqueness~\cite{BSzW, Wiedemann}. Indeed, based on physical grounds, Eyink and Quan argue in two recent preprints~\cite{EQ1, EQ2} that convergence to the smooth Euler solution should generally \emph{not} be expected due to boundary layer formation; in such cases, in particular, Kato's criterion would not be satisfied.

\section{The Model}
\subsection{The Inhomogeneous Incompressible Navier-Stokes Equations}
Let $\Omega\subset \bb{R}^n$ be a bounded, open, connected and simply connected subset with $C^2$-boundary. Let $u_\nu:[0,\infty)\times\Omega\to \bb{R}^n$ and $\rho_\nu:[0,\infty)\times\Omega\to \bb{R}$, $\nu>0$, $p_\nu:[0,\infty)\times\Omega\to \bb{R}$ be functions that depend on the viscosity parameter $\nu>0$. Here the physical meaning of $u_\nu$ is the velocity of the fluid, $p_\nu$ its pressure and $\rho_\nu$ is the density of the fluid.

For any $\nu>0$, we consider the inhomogeneous incompressible Navier-Stokes equations (NSE):

    \begin{align}
        \label{nse1} \partial_t (\rho_\nu u_\nu) +\div(\rho_\nu  u\otimes u_\nu) +\nabla p_\nu-\nu \Delta u_\nu&=0\\
    \label{nse2} \partial_t \rho_\nu+\div(\rho_\nu u_\nu)&=0 \\ 
    \label{nse3} \div(u_\nu)&=0,
       \end{align}
    together with the Dirichlet boundary condition $\restr{u_\nu}{\partial \Omega}=0$ and the initial conditions $\rho_\nu(0)=\rho_0$ and $u_\nu(0)=u_0$ such that $\div (u_0)=0$ and $\restr{u_0}{\partial\Omega}=0$ 
    for any $\nu>0$. For simplicity, the initial conditions are taken independent of viscosity.

For the weak formulation, we write as usual $H_0^1(\Omega)$ for the functions with square integrable weak derivatives and zero boundary trace, and $H(\Omega)$ for the solenoidal square integrable vector fields (i.e., those that are divergence-free and have zero normal trace on the boundary in a suitable sense).
		
We define Leray-Hopf type solutions of the system of equations (\ref{nse1})-(\ref{nse3}) in the following sense:\\
Let $\rho_\nu\in L^\infty((0,\infty)\times\Omega)$ and let $u_\nu\in L^2(0,\infty;(H\cap H^1_0)(\Omega))$.
Then $\rho_\nu$ and $u_\nu$ are considered a weak solution if they satisfy the weak momentum equation 
\begin{equation}
\label{moment_1}
-\int_\Omega \rho_0 u_0 \Phi(0,x)\Diff x=\int_0^\infty\int_\Omega \rho_\nu u_\nu  \partial_t\Phi+\rho_\nu u_\nu\otimes u_\nu :\nabla \Phi- \nu \nabla u_\nu:\nabla\Phi \Diff x\Diff t
\end{equation}
for all $\Phi\in C_c^\infty([0,\infty)\times \Omega,\bb{R}^n)$ with $\div\Phi(t,x)=0$,
the weak transport equation
\begin{equation}
\label{trans_1}
-\int_\Omega \rho_0 \theta(0,x)\Diff x=\int_0^\infty\int_\Omega \rho_\nu \partial_t\theta+\rho_\nu u_\nu \cdot\nabla\theta\Diff x\Diff t
\end{equation}
for all $\theta\in C_c^\infty([0,\infty)\times \overline{\Omega},\bb{R})$, and
the energy inequality
 \begin{equation}
 \label{ineq}
 \frac{1}{2}\int_\Omega\rho_\nu |u_\nu|^2\Diff x+\nu \int_0^T\int_\Omega|\nabla u_\nu|^2 \Diff x\Diff t\leq \frac{1}{2}\int_\Omega\rho_0 |u_0|^2\Diff x
  \end{equation}
  for a.e. $T\in(0,\infty)$.
  The global existence of weak solutions to the inhomogeneous incompressible Navier-Stokes equation in the sense specified above was first established by Kazhikov \cite{kazi} and further results have been published by Kazhikov and Smagulov \cite{kazi2}, Lions \cite{Lions} and Simon \cite{simon}.
  \begin{theorem}(\cite{Lions})
  Consider the initial data $\rho_0\in L^\infty((0,\infty)\times \Omega)$ and $u_0\in L^2(0,\infty;H(\Omega))$. Then there exists a solution $(u_\nu,\rho_\nu)$ satisfying $(\ref{moment_1})$, $(\ref{trans_1})$ and $(\ref{ineq})$
   and lying in the spaces 
       $\rho_\nu\in L^\infty([0,\infty)\times \Omega)$, $u_\nu\in L^2(0,T,(H\cap H_0^1)(\Omega))$,  $\rho_\nu \in C([0,T),L^p(\Omega))$ for all $1\leq p<\infty$ and all $T\in(0,\infty)$. Moreover,
			\begin{equation*}
			\inf_{x\in\Omega}\rho_0(x)\leq \rho(x,t)\leq\sup_{x\in\Omega}\rho_0(x)<\infty
        			\end{equation*}
							for almost all $(t,x)\in(0,\infty)\times{\Omega}$.
  \end{theorem}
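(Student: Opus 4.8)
The plan is to follow the classical Faedo--Galerkin-plus-compactness scheme of Lions; here $\nu>0$ is fixed, so the full viscous dissipation is available and the only genuine difficulty is the density-dependent coupling of the two equations. First I would construct approximate solutions $(u^k,\rho^k)$ by a semi-Galerkin method: project the momentum equation onto the span $V_k$ of the first $k$ eigenfunctions of the Stokes operator, which automatically enforces $\div u^k=0$ and the no-slip condition, and solve the transport equation $(\ref{trans_1})$ for $\rho^k$ along the flow of $u^k$. Since $V_k$ consists of smooth divergence-free fields, $u^k$ is Lipschitz in $x$, its flow map is well defined and volume preserving, and $\rho^k(t,\cdot)$ is simply $\rho_0$ transported by it. The coupled system, a finite-dimensional ODE for the velocity coefficients together with the flow-map density, has a local-in-time solution by a fixed-point argument, which the uniform estimates below extend to all of $[0,\infty)$.

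Next I would derive the two a priori estimates that pass to every limit. Because the advecting flow is measure preserving, the essential range of $\rho^k(t,\cdot)$ coincides with that of $\rho_0$; this yields at once $\rho^k\in L^\infty$ and the pointwise bounds $\inf_\Omega\rho_0\le\rho^k\le\sup_\Omega\rho_0$, whose time-independence is precisely what makes the solution global. For the energy, I would test the Galerkin momentum equation with $u^k$ itself: using $\div u^k=0$ and the continuity equation, the sum of the time-derivative and convective terms collapses to $\tfrac{d}{dt}\tfrac12\int_\Omega\rho^k|u^k|^2$, the pressure drops out by incompressibility, and the viscous term produces $\nu\int_\Omega|\nabla u^k|^2$. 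Integrating in time gives exactly $(\ref{ineq})$, hence uniform bounds for $u^k$ in $L^\infty(0,T;L^2)\cap L^2(0,T;H^1_0)$ and for $m^k:=\rho^k u^k$ in $L^\infty(0,T;L^2)$.

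The uniform bounds furnish, along a subsequence, $u^k\rightharpoonup u_\nu$ weakly in $L^2(0,T;H^1_0)$ and weak-$*$ in $L^\infty(0,T;L^2)$, together with $\rho^k\rightharpoonup^*\rho_\nu$ weak-$*$ in $L^\infty$. To identify the nonlinear limits I would invoke the Aubin--Lions--Simon lemma twice. From the momentum equation $\partial_t m^k$ is bounded in a negative-order space; since $m^k$ is bounded in $L^\infty(0,T;L^2)$ and $L^2\hookrightarrow\hookrightarrow H^{-1}$ compactly on the bounded domain, this gives $m^k\to m$ strongly in $C([0,T];H^{-1})$ for some limit $m$. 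From the continuity equation, $\partial_t\rho^k=-\div m^k$ is bounded in $L^2(0,T;H^{-1})$, so the same compact embedding yields $\rho^k\to\rho_\nu$ strongly in $C([0,T];H^{-1})$. Pairing the $H^{-1}$-strongly convergent $\rho^k$ against the $H^1_0$-weakly convergent $u^k$ identifies $m=\rho_\nu u_\nu$, and the same strong--weak pairing applied to $m^k\otimes u^k$ lets the convective term converge to $\rho_\nu u_\nu\otimes u_\nu$ in the sense of distributions. Passing $k\to\infty$ in $(\ref{moment_1})$, $(\ref{trans_1})$ and $(\ref{ineq})$, using weak lower semicontinuity for the kinetic and viscous terms in the energy inequality, then produces the desired weak solution.

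The main obstacle is exactly this passage to the limit in the density-dependent nonlinearities, and the point I would stress is that it hinges on obtaining strong $H^{-1}$ compactness of \emph{both} $\rho^k$ and $m^k$ and then exploiting the strong--weak product structure; notably, no strong convergence of the velocity itself is needed, which is what lets the argument survive even without a positive lower bound on the density. To finish, I would upgrade the density convergence and obtain the continuity $\rho_\nu\in C([0,T);L^p)$ for all $1\le p<\infty$: since the limiting transport equation $\partial_t\rho_\nu+\div(\rho_\nu u_\nu)=0$ has the divergence-free field $u_\nu\in L^2(0,T;H^1_0)$ as advecting velocity, it falls within the DiPerna--Lions renormalisation theory, so $\norml{\rho_\nu(t)}=\norml{\rho_0}$ for every $t$. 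Combined with the flow-conservation $\norml{\rho^k(t)}=\norml{\rho_0}$ and weak convergence, this forces norm convergence and hence strong $L^2$ convergence of $\rho^k$, which bootstraps to every $L^p$ with $p<\infty$ and delivers both the renormalised continuity equation and the asserted time continuity.
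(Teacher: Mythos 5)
The paper offers no proof of this theorem at all: it is quoted from Lions \cite{Lions}, with the existence theory going back to Kazhikov \cite{kazi}, Kazhikov--Smagulov \cite{kazi2} and Simon \cite{simon}, so there is no in-paper argument to compare against. Your semi-Galerkin sketch is the classical route of Simon and Antontsev--Kazhikhov, and it is essentially sound: the pointwise density bounds via the measure-preserving flow, the energy identity at the approximate level yielding (\ref{ineq}) by lower semicontinuity, compactness of $\rho^k$ and of $m^k=\rho^k u^k$ in $C([0,T];H^{-1})$ via Aubin--Lions--Simon, identification of $\rho_\nu u_\nu$ and $\rho_\nu u_\nu\otimes u_\nu$ by pairing strong $H^{-1}$-convergence against weak $L^2(0,T;H^1_0)$-convergence, and the DiPerna--Lions renormalisation endgame giving $\norm{\rho_\nu(t)}_{L^2}=\norm{\rho_0}_{L^2}$ and hence $\rho_\nu\in C([0,T);L^p(\Omega))$ for all $p<\infty$, are exactly the ingredients of the cited proofs. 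Lions' book itself proceeds through a regularisation scheme designed to tolerate vacuum rather than a Galerkin basis, but the compactness core and the renormalisation argument for the time continuity of the density are the same; your closing observation that no strong convergence of $u^k$ itself is needed, so that the argument survives without a positive lower bound on $\rho_0$, is precisely the point of Lions' formulation.

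Two steps you gloss over would need a sentence each in a full write-up. First, with only a $C^2$ boundary the Stokes eigenfunctions lie in $H^2\cap H^1_0(\Omega)$, so in dimension three $u^k(t)$ need not be Lipschitz in $x$ and the flow map is not classically well defined; one should either solve the linear transport step within the DiPerna--Lions theory (for which $W^{1,1}$ regularity of the divergence-free velocity suffices and the $L^\infty$ bounds on $\rho^k$ persist) or mollify $u^k$ in that step. Second, in a semi-Galerkin scheme the momentum equation holds only against test functions in $V_k$, so what is directly bounded is $\partial_t P_k m^k$ rather than $\partial_t m^k$; one must either invoke uniform boundedness of the projections $P_k$ on the relevant spaces (available for the Stokes basis) or replace Aubin--Lions by Simon's fractional-in-time compactness criterion. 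Both are standard repairs, so the proposal stands as a correct reconstruction of the cited result rather than containing a genuine gap.
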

  Note that we can restrict equations (\ref{moment_1}) and (\ref{trans_1}) to any $0<T<\infty$, which leads to following equations:
  \begin{equation}
\label{moment}
\int_\Omega \rho_\nu(T) u_\nu(T) \Phi(T)\Diff x-\int_\Omega \rho_0 u_0 \Phi(0,x)\Diff x=\int_0^T\int_\Omega \rho_\nu u_\nu  \partial_t\Phi+\rho_\nu u_\nu\otimes u_\nu :\nabla \Phi- \nu \nabla u_\nu:\nabla\Phi \Diff x\Diff t,
\end{equation}
and
\begin{equation}
\label{trans}
\int_\Omega \rho_\nu(T) \theta(T)\Diff x-\int_\Omega \rho_0 \theta(0,x)\Diff x=\int_0^T\int_\Omega \rho_\nu \partial_t\theta+\rho_\nu u_\nu \cdot\nabla\theta\Diff x\Diff t.
\end{equation}

\subsection{The Inhomogeneous Incompressible Euler Equations}
Setting $\nu=0$ in equation (\ref{nse1}), the second order derivatives vanish and one obtains formally the inhomogeneous incompressible Euler equations (EE):
    \begin{align}
        \label{ee1} \partial_t (\rho u) +\div(\rho  u\otimes u) +\nabla p=0\\
    \label{ee2} \partial_t \rho+\div(\rho u)=0 \\ 
    \label{ee3} \div(u)=0,
       \end{align}
       where $u:[0,\infty)\times\Omega\to \bb{R}^n$ and $\rho:[0,\infty)\times\Omega\to \bb{R}$, $p:[0,\infty)\times\Omega\to \bb{R}$.
			
    As for the Navier-Stokes equations, we impose the initial conditions  $\rho(0,x)=\rho_0(x)$ and $u(0,x)=u_0(x)$ satisfying  $\restr{u_0\cdot n}{\partial\Omega}=0$, where $n$ is the outward pointing normal to $\partial\Omega$. In contrast to the Navier-Stokes equations, we work with a first order equation here, hence we only impose the slip condition $\restr{u\cdot n}{\partial \Omega}=0$.
		
    Here we are interested in classical solutions to (\ref{ee1})-(\ref{ee3}), hence due to the regularity of the functions we can rewrite the equation in the following way:
    \begin{align}
        \label{pee1} \rho \partial_t u +\rho(u\cdot \nabla)u +\nabla p&=0\\
    \label{pee2} \partial_t \rho+u\cdot \nabla \rho&=0 \\ 
    \label{pee3} \div(u)&=0.
       \end{align}
       Local existence of classical solutions to the system (\ref{pee1})-(\ref{pee3}) was established by Valli and Zaj\k aczkowski with the following assumptions:
    \begin{theorem}(\cite{valli})
    \label{thm_valli}
        Let $\Omega\subset\bb{R}^n$ be a bounded open domain with $\partial\Omega\in C^4$. Let $p>n$. Let $u_0\in W^{2,p}(\Omega)$ with $\div(u_0)=0$, $\restr{u_0\cdot n}{\partial\Omega}=0$. Let $\rho_0\in W^{2,p}(\Omega)$ such that $0<\inf_{x\in\Omega}\rho_0(x)\leq \rho(x)\leq \sup_{x\in\Omega}\rho_0(x)<\infty$. Then there exists $t_0>0$ and functions $\rho\in C^0([0,t_0];W^{2,p}(\Omega))\cap C^1([0,t_0];W^{1,p}(\Omega))$ and $u\in C^0([0,t_0];W^{2,p}(\Omega))\cap C^1([0,t_0];W^{1,p}(\Omega))$ and $p\in C^0([0,t_0];W^{3,p}(\Omega))$ such that $(\rho,u,p)$ satisfy the inhomogeneous incompressible Euler equations (\ref{ee1})-(\ref{ee3}).
				
        Furthermore 
        $$0<\inf_{x\in\Omega}\rho_0(x)\leq \rho(x,t)\leq\sup_{x\in\Omega}\rho_0(x)<\infty$$
        for all $(t,x)\in[0,t_0]\times\overline{\Omega}$.
    \end{theorem}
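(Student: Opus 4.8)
The plan is to construct the solution by a linearised iteration scheme and to close uniform a priori estimates in the high-regularity space $W^{2,p}(\Omega)$. Since $p>n$, the embedding $W^{2,p}(\Omega)\hookrightarrow C^1(\overline\Omega)$ holds and both $W^{1,p}(\Omega)$ and $W^{2,p}(\Omega)$ are Banach algebras, which is what makes the nonlinear products tractable. The first structural observation is that the pressure is not a free unknown: dividing the momentum equation by $\rho$ to get $\partial_t u+(u\cdot\nabla)u+\tfrac1\rho\nabla p=0$, taking the divergence, and using $\div u=0$ (so $\div\partial_t u=0$) turns it into the elliptic problem $\div(\tfrac1\rho\nabla p)=-\nabla u:(\nabla u)^{T}$, together with the Neumann data $\tfrac1\rho\partial_n p=-[(u\cdot\nabla)u]\cdot n$ on $\partial\Omega$ obtained by dotting the momentum equation with $n$ and using $u\cdot n=0$. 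Because the right-hand side is a product of first derivatives it lies in $W^{1,p}$ whenever $u\in W^{2,p}$, so variable-coefficient elliptic regularity yields $p\in W^{3,p}$; this is exactly the gain of one derivative for the pressure asserted in the statement.

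For the iteration, given $(\rho^k,u^k)$ with $u^k$ divergence-free and tangent to the boundary, I would first solve the linear transport equation $\partial_t\rho^{k+1}+u^k\cdot\nabla\rho^{k+1}=0$, $\rho^{k+1}(0)=\rho_0$. Since $u^k\in C^1(\overline\Omega)$ its flow map is well defined, $\rho^{k+1}$ is constant along characteristics, so the pointwise bounds $\inf\rho_0\le\rho^{k+1}\le\sup\rho_0$ are automatically propagated, and $\|\rho^{k+1}\|_{W^{2,p}}$ is controlled by $\|\rho_0\|_{W^{2,p}}$ times an exponential in $\int_0^t\|u^k\|_{W^{2,p}}$. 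Next, with $\rho^{k+1}$ frozen, I would update the velocity by solving the linear problem $\partial_t u^{k+1}+P_{\rho^{k+1}}[(u^k\cdot\nabla)u^{k+1}]=0$, where $P_\rho$ is the Leray-type projection onto divergence-free, tangent fields associated with the weighted inner product $\int_\Omega\rho\, u\cdot v\,\Diff x$; this projection annihilates $\tfrac1\rho\nabla p$ and thereby eliminates the pressure from the evolution, the pressure being recovered afterwards from the elliptic problem above.

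The heart of the argument is the uniform-in-$k$ estimate on a time interval $[0,t_0]$ that does not shrink. I would differentiate the velocity equation up to second order, test against the appropriate weighted quantities, and use the algebra property together with commutator estimates for $[P_\rho,u\cdot\nabla]$ and for the transport operator to bound $\tfrac{d}{dt}\|u^{k+1}\|_{W^{2,p}}$ by a polynomial in $\|u^k\|_{W^{2,p}}$, $\|u^{k+1}\|_{W^{2,p}}$ and $\|\rho^{k+1}\|_{W^{2,p}}$; the elliptic estimate for $p$ enters through the boundedness of $P_\rho$ on $W^{2,p}$, whose operator norm depends only on $\|\rho\|_{W^{2,p}}$ and on the lower bound $\inf\rho_0$. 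Feeding in the transport bound for $\rho^{k+1}$ and running a Grönwall argument, I expect a closed bound $\sup_{[0,t_0]}\|u^k\|_{W^{2,p}}\le M$ uniformly in $k$ for $t_0=t_0(\|u_0\|_{W^{2,p}},\|\rho_0\|_{W^{2,p}},\inf\rho_0)$ small enough. This uniform high-norm bound is the step I expect to be the main obstacle, precisely because the elliptic constant for the pressure and the transport constant for the density are coupled through the $W^{2,p}$ norms, so one must organise the estimate as a single closed nonlinear differential inequality rather than treat the two equations separately.

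Finally, to pass to the limit I would show the iteration is contractive in a weaker norm: estimating the differences in $L^2(\Omega)$, with the high-norm bounds controlling all coefficients, one obtains (after possibly shrinking $t_0$) $\sup_{[0,t_0]}\big(\|u^{k+1}-u^k\|_{L^2}+\|\rho^{k+1}-\rho^k\|_{L^2}\big)\le \tfrac12\sup_{[0,t_0]}\big(\|u^{k}-u^{k-1}\|_{L^2}+\|\rho^{k}-\rho^{k-1}\|_{L^2}\big)$. Thus $(u^k,\rho^k)$ converges strongly in the low norm; interpolating with the uniform $W^{2,p}$ bound upgrades the convergence enough to pass to the limit and identify $(\rho,u,p)$ as a solution of (\ref{ee1})--(\ref{ee3}). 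The time regularity $u,\rho\in C^1([0,t_0];W^{1,p}(\Omega))$ is then read off from the equations, since $\partial_t\rho=-u\cdot\nabla\rho$ and $\partial_t u=-(u\cdot\nabla)u-\tfrac1\rho\nabla p$ have right-hand sides continuous in time with values in $W^{1,p}$, while the bounds $\inf\rho_0\le\rho\le\sup\rho_0$ persist in the limit because they hold along the whole sequence.
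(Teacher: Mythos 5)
This theorem is not proved in the paper at all: it is quoted verbatim from Valli and Zaj\k{a}czkowski~\cite{valli}, so there is no internal proof to compare your attempt against. Judged against the cited literature, your sketch follows essentially the classical route used there (and in the earlier H\"older-space work of Beir\~ao da Veiga and Valli): successive approximations in which the density is updated by solving a linear transport equation along the flow of the previous velocity iterate (which automatically propagates the pointwise bounds $\inf\rho_0\leq\rho\leq\sup\rho_0$), the pressure is recovered from the variable-coefficient Neumann problem $\div(\rho^{-1}\nabla p)=-\nabla u:(\nabla u)^{T}$ with $\rho^{-1}\partial_n p=-[(u\cdot\nabla)u]\cdot n$ on $\partial\Omega$, and the solution is obtained from uniform $W^{2,p}$ bounds together with contraction in a weaker norm. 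Your derivation of the pressure problem is correct, including the gain $p\in W^{3,p}$ from elliptic regularity (the $C^4$ boundary and the algebra property of $W^{1,p}$, $p>n$, are exactly what this needs), and the compatibility condition for the Neumann problem holds automatically by the divergence theorem.

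Two steps in your outline are asserted where the real work lies. First, the commutator bound for $[P_\rho,u^k\cdot\nabla]$ on $W^{2,p}$ is the technically dangerous point on a bounded domain: the weighted projection is nonlocal and differentiation does not respect the boundary, so estimating this commutator directly is harder than your sketch suggests. The standard way to organise the estimate --- effectively the one in the cited work --- is to keep the pressure explicit rather than projected, writing the velocity update as the forced transport equation $\partial_t u^{k+1}+(u^k\cdot\nabla)u^{k+1}=-(\rho^{k+1})^{-1}\nabla p^{k+1}$ with $p^{k+1}\in W^{3,p}$ supplied by the elliptic problem; then the $W^{2,p}$ bound is a routine Gr\"onwall argument along characteristics with $W^{2,p}$ forcing, and the only place elliptic theory enters is the (unproblematic) estimate $\norm{\nabla p^{k+1}}_{W^{2,p}}\lesssim \norm{u^k}_{W^{2,p}}\norm{u^{k+1}}_{W^{2,p}}$, with constant depending on $\norm{\rho^{k+1}}_{W^{2,p}}$ and $\inf\rho_0$. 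This is equivalent to your formulation but avoids differentiating $P_\rho$. Second, continuity in time with values in $W^{2,p}$ does not follow from the uniform high-norm bound plus low-norm convergence alone; interpolation only yields continuity in intermediate norms together with weak continuity in $W^{2,p}$, and upgrading to strong continuity $C^0([0,t_0];W^{2,p})$ requires the usual extra argument (norm continuity via the transport structure, or approximation of the data). Neither point is a wrong turn --- both are standard --- but they are precisely where a complete write-up must spend its effort, so your proposal should be read as a correct plan rather than a complete proof.
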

 Note that Theorem \ref{thm_valli} implies the existence of classical solutions to the Euler equations locally in time. This is due to the fact that from the Sobolev embedding we have $W^{2,p}(\Omega)\subset C^{1,1-\frac{n}{p}}(\overline{\Omega})$ whenever $p>n$. In particular the solution satisfies the energy equality
 \begin{equation}
 \label{ineq_ee}
 \int_\Omega\rho(t) |u(t)|^2\Diff x= \int_\Omega\rho_0 |u_0|^2\Diff x
  \end{equation}
	for every $t\in[0,t_0]$.

\section{The Main Result}
The main result of this paper is the following theorem that gives a sufficient condition on the energy dissipation in order to have convergence of a sequence of weak solutions of the Navier-Stokes equation to a strong solution of the Euler equation.
 \begin{theorem}\label{main_th}
Let $T>0$ and $(\rho,u)$ be a classical ($C^1$) solution of (EE)  on $[0,T)\times\Omega$ with the initial condition $u(0,x)=u_0(x)\in L^2(\Omega)$ and $\rho(0,x)=\rho_0(x)\in L^\infty(\Omega)$ and the boundary condition $\restr{u\cdot n}{\partial\Omega}=0$.

We pick a sequence of viscosities $(\nu_n)_{n\in\bb{N}}>0$ such that $\nu_n\to 0$. (In the following we drop the index $n$ to simplify the notation). Let $(\rho_\nu,u_\nu)$ be weak solutions of (NSE) on $[0,T]\times\Omega$, in the sense that they satisfy (\ref{moment_1}), (\ref{trans_1}) and the energy inequality (\ref{ineq}) for the initial condition $u_\nu(0)=u_0\in L^2(\Omega)$ and $\rho_\nu(0)=\rho_0\in L^\infty(\Omega)$ and the Dirichlet boundary condition $\restr{u_\nu}{\partial\Omega}=0$. 

Further we suppose a uniform bound from below and above on $\rho_0$:
 There exist $c,C>0$ such that $c<\rho_0(x)<C$ for a.e $x\in \Omega$. Set $\Omega
_\nu=\{x\in\Omega:\dist(x,\partial\Omega)\leq \nu\}$. Finally, let $0<T'<T$. 

Then $\norm{u_\nu-u}_{L_{loc}^\infty([0,T);L^2(\Omega))}\to 0$ and $\norm{\rho_\nu-\rho}_{L_{loc}^\infty([0,T');L^2(\Omega))}\to 0$, as $\nu\to 0$, if and only if

\begin{equation}\label{katocrit}
\nu\int_0^{T'}\int_{\Omega_\nu}|\nabla u_\nu|^2 \Diff x\Diff t\to 0 \quad \text{as}\quad \nu\to 0.
\end{equation}

\end{theorem}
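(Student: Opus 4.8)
The plan is to prove the two directions separately, the implication ``convergence $\Rightarrow$ (\ref{katocrit})'' being soft and ``(\ref{katocrit}) $\Rightarrow$ convergence'' requiring the full relative-energy machinery. For the soft direction I would argue from the energy balance alone: for a.e.\ $T\in(0,T')$ the energy inequality (\ref{ineq}) gives
\[
\nu\int_0^T\!\int_\Omega|\nabla u_\nu|^2\Diff x\Diff t\le\frac12\int_\Omega\rho_0|u_0|^2\Diff x-\frac12\int_\Omega\rho_\nu(T)|u_\nu(T)|^2\Diff x .
\]
Since $c\le\rho_\nu\le C$ and $u\in L^\infty$, the assumed convergences $u_\nu\to u$ and $\rho_\nu\to\rho$ in $L^\infty_tL^2_x$ force $\int_\Omega\rho_\nu(T)|u_\nu(T)|^2\to\int_\Omega\rho(T)|u(T)|^2=\int_\Omega\rho_0|u_0|^2$, the last equality being the Euler energy identity (\ref{ineq_ee}). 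Hence the right-hand side tends to $0$, so even the full dissipation $\nu\int_0^{T'}\int_\Omega|\nabla u_\nu|^2\to0$, which is stronger than (\ref{katocrit}).

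The reverse implication is the substantial one, and I would attack it with the relative energy
\[
\mathcal{F}(t)=\frac12\int_\Omega\rho_\nu|u_\nu-u|^2\Diff x+\frac12\int_\Omega|\rho_\nu-\rho|^2\Diff x ,
\]
noting $\mathcal{F}(0)=0$ because the initial data agree. The key device is a Kato corrector $v=v_\nu$: a divergence-free field supported in $\Omega_\nu$ with $v|_{\partial\Omega}=u|_{\partial\Omega}$ and $\|v\|_{L^2}+\|\partial_t v\|_{L^2}\lesssim\nu^{1/2}$, $\|\nabla v\|_{L^2}\lesssim\nu^{-1/2}$, and the pointwise bound $|\nabla v|\lesssim\nu^{-1}$ on $\Omega_\nu$; such $v$ exists by the classical construction in boundary normal coordinates \cite{kato}. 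Then $\Phi:=u-v$ is divergence-free with zero boundary trace, hence admissible in (\ref{moment}) after the standard density argument. I would expand $\tfrac12\int\rho_\nu|u_\nu-u|^2=\tfrac12\int\rho_\nu|u_\nu|^2-\int\rho_\nu u_\nu\cdot u+\tfrac12\int\rho_\nu|u|^2$ and evaluate the three pieces using (\ref{ineq}), the momentum equation (\ref{moment}) tested against $\Phi=u-v$, and the transport equation (\ref{trans}) tested against $\theta=\tfrac12|u|^2$, respectively.

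After inserting the Euler equations (\ref{pee1})--(\ref{pee3}) into the $\partial_t u$ contributions and integrating the pressure by parts (it vanishes because $u_\nu-u$ is divergence-free with vanishing normal trace), all interior terms reorganize into the nonpositive dissipation $-\nu\int_0^T\int_\Omega|\nabla u_\nu|^2$, which I drop, plus error terms of two types. The first type, e.g.\ $-\int_0^T\int\rho(u_\nu-u)\cdot\big((u_\nu-u)\cdot\nabla\big)u$ and the density-difference terms such as $\int_0^T\int(\rho_\nu-\rho)(u_\nu-u)\cdot\partial_t u$, are all bounded by $C\mathcal{F}$ using $\|\nabla u\|_{L^\infty},\|\partial_t u\|_{L^\infty}<\infty$ on $[0,T']$ together with $c\le\rho_\nu\le C$ and $\|\rho_\nu-\rho\|_{L^\infty}\le 2C$; these feed Gronwall. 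The second type are the corrector errors $\int\rho_\nu u_\nu\cdot\partial_t v$, $\int\rho_\nu u_\nu\otimes u_\nu:\nabla v$, $\nu\int\nabla u_\nu:\nabla v$, the time-endpoint terms $\int\rho_\nu(T)u_\nu(T)\cdot v(T)$ and $\int\rho_0 u_0\cdot v(0)$, and the smooth viscous term $\nu\int\nabla u_\nu:\nabla u$.

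The endpoint, $\partial_t v$, and $\nu\int\nabla u_\nu:\nabla u$ terms are all $O(\nu^{1/2})$ by the corrector estimates and the uniform energy bound $\nu\int_0^{T'}\int_\Omega|\nabla u_\nu|^2\le C$; the viscous cross term obeys $|\nu\int\nabla u_\nu:\nabla v|\lesssim\nu^{1/2}\|\nabla u_\nu\|_{L^2(\Omega_\nu)}$, so after Cauchy--Schwarz in time it is $\lesssim\big(\nu\int_0^{T'}\int_{\Omega_\nu}|\nabla u_\nu|^2\big)^{1/2}\to0$ by (\ref{katocrit}). The main obstacle is the convective corrector term: $\nabla v$ is of size $\nu^{-1}$, but since $u_\nu$ vanishes on $\partial\Omega$ we have $|u_\nu|\le\nu\,|u_\nu|/d$ on $\Omega_\nu$ with $d=\dist(\cdot,\partial\Omega)$, and the localized Hardy inequality $\int_{\Omega_\nu}|u_\nu|^2/d^2\lesssim\int_{\Omega_\nu}|\nabla u_\nu|^2$ --- which I would derive from the one-dimensional Hardy inequality in the normal variable, using $u_\nu|_{\partial\Omega}=0$ and the $C^2$ boundary --- yields $|\int\rho_\nu u_\nu\otimes u_\nu:\nabla v|\lesssim\nu\int_{\Omega_\nu}|\nabla u_\nu|^2$, once more controlled by (\ref{katocrit}). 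Collecting everything gives $\mathcal{F}(T)\le C\int_0^T\mathcal{F}(s)\,\Diff s+R_\nu$ for all $T\in[0,T']$, with $R_\nu\to0$; the density half of $\mathcal{F}$ closes the loop because renormalization of the two transport equations (valid since $u_\nu\in L^2(0,T;H_0^1)$ is divergence-free and $\rho_\nu\in L^\infty$) gives $\frac{\Diff}{\Diff t}\tfrac12\|\rho_\nu-\rho\|_{L^2}^2=-\int(\rho_\nu-\rho)(u_\nu-u)\cdot\nabla\rho\le C\mathcal{F}$, the $\rho$-weighted part integrating to zero by incompressibility and the normal boundary condition. Gronwall then forces $\sup_{[0,T']}\mathcal{F}\lesssim R_\nu\,e^{CT'}\to0$, which is exactly the claimed convergence of $u_\nu$ and $\rho_\nu$.
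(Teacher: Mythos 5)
Your proposal is correct and follows essentially the same route as the paper's proof: the same relative energy (up to a harmless factor $\tfrac12$ in the density part), the same Kato corrector with identical scalings, the boundary-layer Hardy inequality to absorb the critical convective term $\int\rho_\nu u_\nu\otimes u_\nu:\nabla v$ at cost $C\nu\int_{\Omega_\nu}|\nabla u_\nu|^2$, the same renormalized-transport estimate $\frac{\Diff}{\Diff t}\tfrac12\|\rho_\nu-\rho\|_{L^2}^2=-\int(\rho_\nu-\rho)(u_\nu-u)\cdot\nabla\rho$, and Gr\"onwall; in the converse direction your elementary splitting $\rho_\nu|u_\nu|^2-\rho|u|^2=\rho_\nu\bigl(|u_\nu|^2-|u|^2\bigr)+(\rho_\nu-\rho)|u|^2$ (using $c\le\rho_\nu\le C$) is an equivalent, slightly more direct substitute for the paper's Vitali/Dunford--Pettis argument, and your 1D-Hardy-in-normal-coordinates derivation of the localized Hardy inequality is a legitimate alternative to the paper's partition-of-unity plus weighted-Sobolev proof. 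The one slip is in the soft direction: evaluating the energy inequality at a.e.\ $T\in(0,T')$ controls the dissipation only up to times strictly below $T'$ and therefore does not by itself yield \eqref{katocrit}; as in the paper, you should instead pick an a.e.\ time $\tau\in(T',T)$ at which $u_\nu(\tau)\to u(\tau)$ in $L^2(\Omega)$ (and $\rho_\nu(\tau)$ converges suitably) and use monotonicity of the dissipation integral in time.
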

The `if' part is more difficult. In order to prove it, we use the relative energy functional that measures the distance between $(\rho_\nu,u_\nu)$ and $(\rho,u)$. The relative energy method has frequently been applied for the study of the weak-strong uniqueness of Navier-Stokes equations and related models, see~\cite{Wiedemann}. In a recent work by Crin-Barat, \v{S}kondri\'{c} and Violini, the authors applied the relative energy functional to establish the weak-strong uniqueness of inhomogeneous Navier–Stokes equations far from vacuum \cite{stefan}.

 \begin{definition}\label{relendef}
 We define the relative energy functional
    \begin{equation}\label{relenergydef} 
		E^\nu(t)=E_1^\nu(t)+E_2^\nu(t)=\frac{1}{2}\int_\Omega \rho_\nu(t) |u_\nu(t)-u(t)|^2\Diff x+\int_\Omega|\rho_\nu(t)-\rho(t)|^2\Diff x,
\end{equation}
     where $(\rho_\nu,u_\nu) $ is a solution of (NSE) and $(\rho, u)$ is a solution of (EE).
 \end{definition}

 Note that if we consider $\rho_\nu=\rho=\text{const}$ in Theorem \ref{main_th}, the criterion reduces to the criterion given by Kato \cite{kato} for the homogeneous incompressible equations. To our knowledge, the relative energy~\eqref{relenergydef} has not been used before\footnote{We thank Stefan \v{S}kondri\'{c} and Alessandro Violini for stimulating discussions on the choice of relative energy.}.

\section{The Proof of the Main Result}
\begin{proof}
{\em Step 1: Kato's criterion implies convergence in the viscosity limit.} Assume~\eqref{katocrit}. The strategy of the proof of convergence here is similar to the one used by Kato for the case where the density is constant. First, we establish relative energy estimates to then introduce a corrector function which allows us to make a special choice for the test functions.
    \begin{align*}
     &E_1^\nu(s)=\frac{1}{2}\int_\Omega \rho_\nu(s) |u_\nu(s)-u(s)|^2\Diff x\\
     &=\frac{1}{2}\int_\Omega \rho_\nu(s) |u_\nu(s)|^2\Diff x+\frac{1}{2}\int_\Omega \rho_\nu(s) |u(s)|^2\Diff x-\int_\Omega \rho_\nu(s) u_\nu(s)\cdot u(s)\Diff x\\
     &=\frac{1}{2}\int_\Omega \rho_\nu(s) |u_\nu(s)|^2\Diff x+\frac{1}{2}\int_\Omega \rho(s) |u(s)|^2\Diff x\\
     &-\int_\Omega \rho_\nu(s) u_\nu(s)\cdot u(s)\Diff x+\frac{1}{2}\int_\Omega (\rho_\nu(s)-\rho(s)) u(s)\cdot u(s)\Diff x\\
     &\leq \int_\Omega \rho_0 |u_0|^2\Diff x-\nu\int_0^s \int_\Omega|\nabla u_\nu|^2\Diff x\Diff t-\int_\Omega \rho_\nu(s) u_\nu(s)\cdot u(s)\Diff x+\frac{1}{2}\int_\Omega (\rho_\nu(s)-\rho(s)) u(s)\cdot u(s)\Diff x,\\
 \end{align*}
 where we used the energy (in)equality in the last step.
 
 Before continuing the estimate, we establish some identities:
 First we want to use the weak formulation of the (NSE).
 For this use $u-\Phi_\nu$ as a test function, where $u$ is the smooth solution of the (EE) and $\Phi_\nu$ is chosen such that $\Phi_\nu\in C^\infty(\Omega)$, $\supp(\Phi_\nu)=\{x\in\Omega:\dist(x,\partial\Omega)\leq \nu\}=:\Omega_\nu$, $\restr{(u-\Phi_\nu)}{\partial\Omega}=0$, $\div(u-\Phi_\nu)=0$ on $[0,\infty)\times \Omega$. 
 Furthermore $\Phi_\nu$ satisfies (see Appendix \ref{appendix_1})
 \begin{enumerate}
  \item $\norm{\Phi_\nu}_{L^\infty(\Omega)}\leq K$
     \item $\norm{\nabla\Phi_\nu}_{L^\infty(\Omega)}\lesssim \nu^{-1}$
     \item $\norm{\Phi_\nu}_{L^2(\Omega)}\lesssim \nu^\frac{1}{2}$
     \item $\norm{\partial_t\Phi_\nu}_{L^2(\Omega)}\lesssim \nu^\frac{1}{2}$
     \item $\norm{\nabla\Phi_\nu}_{L^2(\Omega)}\lesssim \nu^{-\frac{1}{2}}$
 \end{enumerate}
 Hence equation $(\ref{moment})$ becomes
 \begin{equation*}
 \begin{split}
&\int_\Omega \rho_\nu(s) u_\nu(s)\cdot (u(s)-\Phi_\nu(s))\Diff x-\int_\Omega \rho_0 u_0\cdot(u_0-\Phi_\nu(0)) \Diff x\\
&=\int_0^s\int_\Omega \rho_\nu u_\nu\cdot  \partial_t(u-\Phi_\nu)+\rho_\nu u_\nu\otimes u_\nu :\nabla (u-\Phi_\nu)- \nu \nabla u_\nu:\nabla(u-\Phi_\nu) \Diff x\Diff t.
 \end{split}
\end{equation*}
Using that $\norm{\Phi_\nu}_{L^2(\Omega)}\lesssim \nu^\frac{1}{2},$ $\norm{\partial_t\Phi_\nu}_{L^2(\Omega)}\lesssim \nu^\frac{1}{2} $ and that $\norm{\rho_\nu}_{L^\infty(\Omega)}\leq C$, we get that in the limit $\nu\to 0$

\begin{equation}
\label{testphi}
 \begin{split}
&o(1)+\int_\Omega \rho_\nu(s) u_\nu(s) \cdot u(s)\Diff x-\int_\Omega \rho_0 u_0\cdot u_0 \Diff x\\
&=\int_0^s\int_\Omega \rho_\nu u_\nu  \cdot\partial_t u+\rho_\nu u_\nu\otimes u_\nu :\nabla (u-\Phi_\nu)- \nu \nabla u_\nu:\nabla(u-\Phi_\nu) \Diff x\Diff t.
 \end{split}
\end{equation}
Further note the identity
\begin{equation}
\label{id1}
    \rho_\nu u_\nu\cdot\partial_t u=\rho u_\nu\cdot \partial_t u+(\rho_\nu-\rho)u\cdot \partial_t u+(\rho_\nu-\rho)(u_\nu-u)\cdot \partial_t u.
\end{equation}
Using the Euler equation (\ref{pee1}), multiplying it with $u_\nu$ and integrating we get
\begin{equation}
\label{id2}
    \begin{split}
        0&=\int_0^s\int_\Omega \rho \partial_t u\cdot u_\nu +\rho(u\cdot \nabla)u\cdot u_\nu +\nabla p\cdot u_\nu\Diff x\Diff t\\
    &=\int_0^s\int_\Omega \rho \partial_t u\cdot u_\nu +\rho(u\cdot \nabla)u\cdot u_\nu \Diff x\Diff t,
    \end{split}
\end{equation}
    
where we used the weak incompressibility in the second equality. Hence, using (\ref{id1}) and (\ref{id2}),
\begin{equation}
\label{id3}
    \begin{split}
         \int_0^s\int_\Omega \rho_\nu u_\nu \cdot\partial_t u\Diff x\Diff t&=\int_0^s\int_\Omega \rho \partial_t u\cdot u_\nu+(\rho_\nu-\rho) \partial_t u\cdot u+(\rho_\nu-\rho) \partial_t u\cdot (u_\nu-u)\Diff x\Diff t\\
    &=\int_0^s\int_\Omega -\rho(u\cdot\nabla)u\cdot u_\nu+(\rho_\nu-\rho) \partial_t u\cdot u+(\rho_\nu-\rho) \partial_t u\cdot (u_\nu-u)\Diff x\Diff t.\\
    \end{split}
\end{equation}

Now we use $|u|^2$ as a test function in the transport equation (\ref{trans}):
\begin{align*}
    \int_\Omega \rho_\nu u\cdot u\Diff x-\int_\Omega \rho_0 u_0\cdot u_0\Diff x=\int_0^s\int_\Omega \rho_\nu \partial_t(u\cdot u)+\rho_\nu u_\nu \cdot\nabla(u\cdot u)\Diff x\Diff t,
\end{align*}
which due to the smoothness of $u$ becomes
\begin{align*}
    \int_\Omega \rho_\nu u\cdot u\Diff x-\int_\Omega \rho_0 u_0\cdot u_0\Diff x=2\int_0^s\int_\Omega \rho_\nu u\cdot \partial_t u+\rho_\nu (u_\nu\cdot \nabla)u\cdot u\Diff x\Diff t.
\end{align*}
Similarly, for the transport equation of the Euler system we get
\begin{align*}
    \int_\Omega \rho u\cdot u\Diff x-\int_\Omega \rho_0 u_0\cdot u_0\Diff x=2\int_0^s\int_\Omega \rho u\cdot \partial_t u+\rho (u\cdot \nabla)u\cdot u\Diff x\Diff t.
\end{align*}
Subtracting both identities from each other gives
\begin{equation}
\label{eq3}
 \int_\Omega (\rho_\nu-\rho) u\cdot u\Diff x=2\int_0^s\int_\Omega (\rho_\nu-\rho) u\cdot \partial_t u+(\rho_\nu (u_\nu\cdot \nabla)-\rho (u\cdot \nabla))u\cdot u\Diff x\Diff t.
\end{equation}
   
Hence we continue the estimate of $E_1^\nu(s)$:
\begin{align*}
    &E_1^\nu(s)\\
    &\leq \int_0^s \int_\Omega-\rho_\nu u_\nu  \cdot\partial_t u-\rho_\nu u_\nu\otimes u_\nu :\nabla (u-\Phi_\nu)+ \nu \nabla u_\nu:\nabla(u-\Phi_\nu)-\nu|\nabla u_\nu|^2\Diff x\Diff t\\
    &+\frac{1}{2}\int_\Omega (\rho_\nu-\rho) u\cdot u\Diff x+o(1)\\
    &=\int_0^s \int_\Omega \rho(u\cdot\nabla)u\cdot u_\nu-(\rho_\nu-\rho)u\cdot \partial_t u-(\rho_\nu-\rho)(u_\nu-u)\cdot \partial_t u\\
    &-\rho_\nu (u_\nu\cdot\nabla)u\cdot u_\nu+\rho_\nu(u_\nu\cdot\nabla)\Phi_\nu\cdot u_\nu+ \nu \nabla u_\nu:\nabla(u-\Phi_\nu)-\nu|\nabla u_\nu|^2\Diff x\Diff t\\
    &+\frac{1}{2}\int_\Omega (\rho_\nu-\rho) u\cdot u\Diff x+o(1)\\
    &=\int_0^s \int_\Omega \rho(u\cdot\nabla)u\cdot u_\nu-\cancel{(\rho_\nu-\rho)u\cdot \partial_t u}-(\rho_\nu-\rho)(u_\nu-u)\cdot \partial_t u\\
    &-\rho_\nu (u_\nu\cdot\nabla)u\cdot u_\nu+\rho_\nu(u_\nu\cdot\nabla)\Phi_\nu\cdot u_\nu+ \nu \nabla u_\nu:\nabla(u-\Phi_\nu)-\nu|\nabla u_\nu|^2\\
    &+\cancel{(\rho_\nu-\rho) u\cdot \partial_t u}+(\rho_\nu (u_\nu\cdot \nabla)-\rho (u\cdot \nabla))u\cdot u\Diff x\Diff t+o(1)\\
    &=\int_0^s \int_\Omega \rho(u\cdot\nabla)u\cdot u_\nu-(\rho_\nu-\rho)(u_\nu-u)\cdot \partial_t u\\
    &-\rho_\nu (u_\nu\cdot\nabla)u\cdot u_\nu+\rho_\nu(u_\nu\cdot\nabla)\Phi_\nu\cdot u_\nu+ \nu \nabla u_\nu:\nabla(u-\Phi_\nu)-\cancel{\nu|\nabla u_\nu|^2}\\
    &+(\rho_\nu (u_\nu\cdot \nabla)-\rho (u\cdot \nabla))u\cdot u\Diff x\Diff t+o(1)\\
    &\leq \int_0^s \int_\Omega \rho(u\cdot\nabla)u\cdot u_\nu-\rho_\nu (u_\nu\cdot\nabla)u\cdot u_\nu+\rho_\nu (u_\nu\cdot \nabla)u\cdot u-\rho (u\cdot \nabla)u\cdot u\\
    &+\rho_\nu(u_\nu\cdot\nabla)\Phi_\nu\cdot u_\nu-(\rho_\nu-\rho)(u_\nu-u)\cdot \partial_t u+ \nu \nabla u_\nu:\nabla(u-\Phi_\nu)\Diff x\Diff t\\
    &+o(1)\\
    &\leq \int_0^s \int_\Omega \underbrace{-\rho_\nu((u_\nu-u)\cdot\nabla)u\cdot(u_\nu-u)}_{I_1}+\underbrace{(\rho_\nu-\rho)(u\cdot \nabla)u\cdot(u-u_\nu)}_{I_2}\\
    &+\underbrace{\rho_\nu(u_\nu\cdot\nabla)\Phi_\nu\cdot u_\nu}_{I_3}-\underbrace{(\rho_\nu-\rho)(u_\nu-u)\cdot \partial_t u}_{I_4}+ \underbrace{\nu \nabla u_\nu:\nabla(u-\Phi_\nu)}_{I_5}\Diff x\Diff t\\
    &+o(1),
\end{align*}

where we used (\ref{testphi}) for the first step, (\ref{id3}) for the second step, (\ref{eq3}) in the third step. In the 4th, 5th, and 6th steps we rearranged and simplified the expression. Now we can estimate the above expressions. 

For $I_5$ we have
\begin{align*}
    \left|\nu\int_0^s\int_\Omega  \nabla u_\nu:\nabla(u-\Phi_\nu)\Diff x\Diff t\right|&\leq \nu\left(\int_0^s\norm{\nabla u_\nu}_{L^2(\Omega)}\norm{\nabla u}_{L^2(\Omega)}+\norm{\nabla u_\nu}_{L^2(\Omega_\nu)}\norm{\nabla \Phi_\nu}_{L^2(\Omega_\nu)}\Diff t\right)\\
    &\leq C\nu \int_0^s\norm{\nabla u_\nu}_{L^2(\Omega)}\Diff t+C\nu^\frac{1}{2}\int_0^s\norm{\nabla u_\nu}_{L^2(\Omega_\nu)}\Diff t\\
    &\leq Cs^\frac{1}{2}\nu^\frac{1}{2}\left(\underbrace{\nu\int_0^s\int_{\Omega}|\nabla u_\nu|^2 \Diff x\Diff t}_{\leq C}\right)^\frac{1}{2}+Cs^\frac{1}{2}\left(\underbrace{\nu\int_0^s\int_{\Omega_\nu}|\nabla u_\nu|^2 \Diff x\Diff t}_{\to 0\quad\text{as}\quad \nu\to 0}\right)^\frac{1}{2}\\
    &\longrightarrow 0 \quad\text{as}\quad \nu\to 0.
\end{align*}

For $I_3$ we compute

\begin{align*}
    \left|\int_0^s\int_\Omega\rho_\nu(u_\nu\cdot\nabla)\Phi_\nu\cdot u_\nu\Diff x\Diff t\right|&\leq \int_0^s\int_\Omega\rho_\nu|u_\nu||\nabla\Phi_\nu|| u_\nu|\Diff x\Diff t\\
    &=\int_0^s\int_{\Omega_\nu}\rho_\nu|u_\nu||\nabla\Phi_\nu|| u_\nu|\Diff x\Diff t\\
    &\leq C\int_0^s\underbrace{\norm{\dist(x,\partial\Omega)^2|\nabla\Phi_\nu|}_{L^\infty}}_{\leq C\nu}\int_{\Omega_\nu}\frac{1}{\dist(x,\partial\Omega)^2}|u_\nu|^2\Diff x\Diff t\\
    &\leq C\nu\int_0^s\int_{\Omega_\nu}|\nabla u_\nu|^2\Diff x\Diff t\\
    &\longrightarrow 0,
\end{align*}
where we used $\rho_\nu\leq C$ a.e., the Hölder inequality in the third step and the Hardy inequality (see Appendix \ref{appendix_2}) for the last step. Note that $\Phi_\nu$ takes its support in $\Omega_\nu$ thus the integral can be restricted to $\Omega_\nu$.

For $I_1$: 
\begin{align*}
    \left|\int_0^s \int_\Omega\rho_\nu((u_\nu-u)\cdot\nabla)u\cdot(u_\nu-u)\Diff x\Diff t\right|&\leq \norm{\nabla u}_{L^\infty}\int_0^s \int_\Omega\rho_\nu|u_\nu-u|^2\Diff x\Diff t\\
    &\leq C\int_0^s E_1^\nu(s)\Diff t.
\end{align*}

    For $I_2$:
    \begin{align*}
       \left|\int_0^s\int_\Omega (\rho_\nu-\rho)(u\cdot \nabla)u\cdot(u-u_\nu)\Diff x\Diff t\right|&\leq \norm{\nabla u}_{L^\infty}\norm{\rho_\nu-\rho}_{L^2([0,s]\times\Omega)}\norm{u_\nu-u}_{L^2([0,s]\times\Omega)}\\
       &\leq \frac{C}{2} \norm{\rho_\nu-\rho}^2_{L^2([0,s]\times\Omega)}+\frac{1}{2}\norm{u_\nu-u}^2_{L^2([0,s]\times\Omega)}\\
       &\leq C\int_0^s E_1^\nu(s)+E_2^\nu(s)\Diff t,
    \end{align*}
    where we used Hölder's and Young's inequality and the fact that $\rho_\nu$ is uniformly bounded from below.

    For $I_4$, we estimate similarly as for $I_2$:
    \begin{align*}
        \left|\int_0^s\int_\Omega (\rho_\nu-\rho)(u_\nu-u)\cdot \partial_t u\Diff x\Diff t\right|&\leq \norm{\partial_t u}_{L^\infty([0,s]\times\Omega)}\norm{\rho_\nu-\rho}_{L^2([0,s]\times\Omega)}\norm{u_\nu-u}_{L^2([0,s]\times\Omega)}\\
       &\leq \frac{C}{2} \norm{\rho_\nu-\rho}^2_{L^2([0,s]\times\Omega)}+\frac{1}{2}\norm{u_\nu-u}^2_{L^2([0,s]\times\Omega)}\\
       &\leq C\int_0^s E_1^\nu(s)+E_2^\nu(s)\Diff t.
    \end{align*}
Finally for $E_2^\nu$:
    \begin{align*}
        E_2^\nu(s)&=\frac{1}{2}\int_\Omega|\rho_\nu(s)-\rho(s)|^2\Diff x\\
        &=\frac{1}{2}\norm{\rho_\nu}_{L^2(\Omega)}^2+\frac{1}{2}\norm{\rho}_{L^2(\Omega)}^2-\int_\Omega \rho_\nu(s)\rho(s)\Diff x\\
        &=\norm{\rho_0}_{L^2(\Omega)}^2-\int_\Omega \rho_\nu(s)\rho(s)\Diff x.\\
    \end{align*}
    The weak formulation of the transport equation with $\rho$ as a test function gives
    $$\int_\Omega \rho_\nu \rho\Diff x-\int_\Omega \rho_0 \rho_0\Diff x=\int_0^s\int_\Omega \rho_\nu \partial_t\rho+\rho_\nu u_\nu \cdot\nabla\rho\Diff x\Diff t.$$
    Hence
    \begin{align*}
        E_2^\nu(s)&\leq-\int_0^s\int_\Omega \rho_\nu \partial_t\rho+\rho_\nu u_\nu \cdot\nabla\rho\Diff x\Diff t\\
        &=\int_0^s\int_\Omega \rho_\nu(u-u_\nu)\cdot\nabla\rho\Diff x\Diff t-\underbrace{\int_0^s\int_\Omega \rho(u-u_\nu)\cdot\nabla\rho\Diff x\Diff t}_{=0}\\
        &=\int_0^s\int_\Omega (\rho_\nu-\rho)(u-u_\nu)\cdot\nabla\rho\Diff x\Diff t\\
        &\leq C\norm{\rho-\rho_\nu}_{L^2([0,s]\times\Omega)}^2+C\norm{u-u_\nu}_{L^2([0,s]\times\Omega)}^2\\
        &\leq C\int_0^s\left(\norm{\rho-\rho_\nu}_{L^2(\Omega)}^2+\norm{\sqrt{\rho_\nu}(u-u_\nu)}_{L^2(\Omega)}^2\right)\Diff t\\
        &\leq C\int_0^s E_1^\nu(t)+E_2^\nu(t)\Diff t.
    \end{align*}

    Hence, we have shown that 
    $$E_1^\nu(s)+E_2^\nu(s)\leq C\int_0^s E_1^\nu(t)+E_2^\nu(t)+o(1)\Diff t$$
    for some constant $C>0$ that does not depend on $\nu$.
    Thus by Grönwall's lemma we conclude that $E^\nu(s)=E_1^\nu(s)+E_2^\nu(s)\rightarrow 0$ in the limit $\nu\to 0$.
		
	{\em Step 2: Convergence in the viscosity limit implies Kato's criterion.}	For the converse assertion, suppose $\rho_{\nu}\to\rho$ and $u_n\to u$ both in $L^\infty(0,T;L^2(\Omega))$.
		 By the energy inequality (\ref{ineq}) we have
    \begin{align}\label{disscontrol}
        \limsup_{\nu\to 0}\nu \int_0^t\int_\Omega|\nabla u_\nu|^2 \Diff x\Diff s&\leq \limsup_{\nu\to 0} \left(\frac{1}{2}\int_\Omega\rho_0 |u_0|^2\Diff x-\frac{1}{2}\int_\Omega\rho_\nu(t) |u_\nu(t)|^2\Diff x\right)
    \end{align}
   for almost every $t\in(0,T)$. Pick a time $T'<\tau< T$ for which~\eqref{disscontrol} is valid, and for which $\rho_\nu(\tau)\to\rho(\tau)$ and $u_\nu(\tau)\to u(\tau)$ in $L^2(\Omega)$, respectively.  We show the convergence to zero of the right hand side of~\eqref{disscontrol} (with $t$ replaced by $\tau$) with the use of the Vitali convergence theorem. First note that due to the $L^2$ convergence in $\Omega$ we can extract subsequences (not relabeled) such that $\rho_\nu(\tau)\to \rho(\tau)$ and $u_\nu(\tau)\to u(\tau)$ almost everywhere in $\Omega$. Hence, along a further subsequence we have  $\rho_\nu|u_\nu|^2\to \rho|u|^2$ almost everywhere, evaluated at $\tau$. Since $\Omega$ is bounded, the convergence in measure of $\rho_\nu|u_\nu|^2\to \rho|u|^2$, evaluated at $\tau$, follows.
		
		As $u_\nu(\tau)\to u(\tau)$ in $L^2(\Omega))$, in particular $|u_\nu(\tau)|^2$ converges in $L^1(\Omega)$, so that the Dunford-Pettis Theorem yields equi-integrability of $|u_\nu(\tau)|^2$, and hence of $\rho_\nu(\tau) |u_\nu(\tau)|^2$ as $(\rho_\nu)$ is uniformly bounded. 
		
    Hence the Vitali convergence theorem applies, i.e.,
     $$\lim_{\nu\to 0}\int_\Omega \rho_\nu(\tau)|u_\nu(\tau)|^2\Diff x=\int_\Omega \rho(\tau)|u(\tau)|^2\Diff x=\int_\Omega \rho_0|u_0|^2\Diff x$$
     by virtue of the energy conservation for the smooth solution of Euler. Together with~\eqref{disscontrol} this yields
		\begin{equation*}
		\lim_{\nu\to 0}\nu \int_0^{\tau}\int_\Omega|\nabla u_\nu|^2 \Diff x\Diff s=0
		\end{equation*}
    for each $k$, and, since $\tau>T'$, 
		\begin{equation*}
		\lim_{\nu\to 0}\nu \int_0^{T'}\int_\Omega|\nabla u_\nu|^2 \Diff x\Diff s=0
		\end{equation*}
		as claimed.
\end{proof}


  

\section{Appendix}
\subsection{Estimates on the Corrector Function \texorpdfstring{$\Phi_\nu$}{Lg}}
\label{appendix_1}
 First, we give a method to explicitly construct the corrector function $\Phi_\nu$ on a bounded, open, connected and simply connected subset $\Omega\subset \bb{R}^3$.

  Let $\eta\in C_c^\infty([0,\infty))$ be such that $\eta(0)=1,\eta(x)=0$ for all $x\geq 1$ and $\eta$ is monotone decreasing. Now define $\eta_\nu(x):=\eta\left(\frac{\dist(x,\partial\Omega)}{\nu}\right)$ for all $x\in\Omega.$ Thus $\eta_\nu\in C_c^\infty(\Omega)$ satisfies 
 $\eta_\nu(0)=1$ and $\supp(\eta_\nu)\subset\Omega_\nu$.

 Since $\div(u)=0$, by the Poincaré Lemma there exists a smooth vector field $\Phi:\Omega\to\bb{R}^3$ such that $\curl(\Phi)=u$ and $\Phi=0$ on $\partial\Omega$.

 Now define a new vector field $\Phi_\nu:\Omega\to\bb{R}^3$ by $\Phi_\nu=\curl(\eta_\nu \Phi)=\eta_\nu\curl(\Phi)+\nabla\eta_\nu\times\Phi$. Then $\Phi_\nu\in C^\infty(\Omega)$ with $\supp(\Phi_\nu)\subset\Omega_\nu$. Furthermore we have $\div(\Phi_\nu)\equiv 0$ on $\Omega$ and $\restr{\Phi_\nu}{\partial\Omega}=u$.

 Now we check the bounds
  \begin{enumerate}[label=(\roman*)]
  \item $\norm{\Phi_\nu}_{L^\infty(\Omega)}\leq K$
     \item $\norm{\nabla\Phi_\nu}_{L^\infty(\Omega)}\lesssim \nu^{-1}$
     \item $\norm{\Phi_\nu}_{L^2(\Omega)}\lesssim \nu^\frac{1}{2}$
     \item $\norm{\partial_t\Phi_\nu}_{L^2(\Omega)}\lesssim \nu^\frac{1}{2}$
     \item $\norm{\nabla\Phi_\nu}_{L^2(\Omega)}\lesssim \nu^{-\frac{1}{2}}$.
 \end{enumerate}
 As a general remark first note that $|\Omega_\nu|\leq C\nu$ and that $|\Phi(x,t)|\leq C\nu$ on $\Omega_\nu$ since $\restr{\Phi}{ \partial\Omega}=0$.
 \begin{enumerate}[label=(\roman*)]
     \item $\Phi$ and $\eta_\nu$ are smooth on the closure of $\Omega$, hence we can bound the first term. For the second term we have that $\nabla\eta_\nu(x)=\frac{-n(\pi(x))}{\nu}\eta'\left(\frac{\dist(x,\partial\Omega)}{\nu}\right)$, where $n$ is the outward pointing normal on $\partial\Omega$ and $\pi(x)\in\partial\Omega$ is the unique point such that $\dist(x,\partial\Omega)=|x-\pi(x)|$. Thus 
     $$|\nabla\eta_\nu\times\Phi|\leq |\nabla\eta_\nu||\Phi|\leq \frac{C}{\nu}|\eta_\nu'|\nu\leq C.$$
     \item Taking the gradient of $\Phi_\nu=\eta_\nu\curl(\Phi)+\nabla\eta_\nu\times\Phi$, we see that we only have to estimate the terms with the highest power of $\nu$. Thus similarly as in (i) we have $|\nabla^2\eta_\nu(x)|\leq \frac{C}{\nu^2}$, hence
     $$|\nabla^2\eta_\nu\times\Phi|\leq |\nabla^2\eta_\nu||\Phi|\leq \frac{C}{\nu^2}|\eta_\nu'|\nu\leq \frac{C}{\nu}$$
     and similarly for the other terms.
     \item Using the estimate from (i) we have
     $$\norm{\Phi_\nu}_{L^2(\Omega)}^2=\int_{\Omega_\nu}|\Phi_\nu|^2\Diff x\leq C\int_{\Omega_\nu}\Diff x\leq C\nu.$$
     \item Since $\eta_\nu$ does not depend on $t$ and $u\in C^1([0,T];W^{1,2}(\Omega))$ (see Theorem \ref{thm_valli}), the claim follows similarly as for (iii).
     \item Similarly as in (iii) and using the estimate from (ii) we have
$$\norm{\nabla\Phi_\nu}_{L^2(\Omega)}^2=\int_{\Omega_\nu}|\nabla\Phi_\nu|^2\Diff x\leq \frac{C}{\nu^2}\int_{\Omega_\nu}\Diff x\leq \frac{C}{\nu}.$$
 \end{enumerate}

 \subsection{The Hardy Inequality for Boundary Layers}
 \label{appendix_2}
 In the proof of Theorem, \ref{main_th} we used a special type of Hardy inequality that allows to bound the weighted $L^2$-norm of a function on a boundary layer with the $L^2$-norm of its gradient on the same boundary layer. Since a proof of this specific inequality seems hard to find in the literature, we give a proof based on the general theory of Hardy inequalities (see for instance \cite{Kufner}).
\begin{theorem}
Let $f\in H_0^1(U)$ for some open bounded subset $U\subset \bb{R}^n$ with $C^2$ boundary. Further we define the set $U_\epsilon=\{x\in U:\dist(\partial U,x)<\epsilon\}$ for some $\epsilon>0$ small enough. Then
$$\int_{U_\epsilon}\frac{|f|^2}{\dist(x,\partial U)^2}\Diff x\leq C\int_{U_\epsilon}|\nabla f|^2\Diff x,$$
for some constant $C>0$ that does not depend on $\epsilon$.
\end{theorem}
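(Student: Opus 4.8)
The plan is to straighten the boundary layer with boundary-normal (collar) coordinates and thereby reduce the $n$-dimensional estimate to the classical one-dimensional Hardy inequality, whose constant is already independent of the interval length.

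First I would record the one-dimensional version: if $g\in H^1(0,\epsilon)$ with $g(0)=0$, then $\int_0^\epsilon \frac{|g(s)|^2}{s^2}\Diff s\leq 4\int_0^\epsilon |g'(s)|^2\Diff s$, with the constant $4$ independent of $\epsilon$. This is the standard integration-by-parts argument: writing $\int_0^\epsilon \frac{g^2}{s^2}\Diff s=-\frac{g(\epsilon)^2}{\epsilon}+2\int_0^\epsilon\frac{gg'}{s}\Diff s$, the boundary term at $s=\epsilon$ is nonpositive and hence dropped, while the contribution at $s=0$ vanishes since $g(0)=0$ forces $g(s)^2/s\to 0$; Cauchy--Schwarz followed by absorption then yields the bound (see \cite{Kufner}).

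Next I would exploit the $C^2$ regularity of $\partial U$. For $\epsilon$ below a geometric threshold the distance function $d(x)=\dist(x,\partial U)$ is $C^2$ on $U_\epsilon$ with $|\nabla d|=1$, and the collar map $\Psi(\sigma,s)=\sigma-s\,n(\sigma)$, $\sigma\in\partial U$, $0<s<\epsilon$, with $n$ the outward normal, is a $C^1$ diffeomorphism onto $U_\epsilon$ satisfying $d(\Psi(\sigma,s))=s$. Its Jacobian $J(\sigma,s)=\prod_i(1-s\kappa_i(\sigma))$, where $\kappa_i$ are the principal curvatures, obeys $0<c_1\leq J\leq c_2$ uniformly on $U_\epsilon$, with $c_1,c_2$ depending only on $U$. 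Setting $\tilde f=f\circ\Psi$ and noting that $\partial_s\Psi=-n(\sigma)$ is a unit vector, one has $|\partial_s\tilde f|=|\nabla f\cdot n|\leq |\nabla f|\circ\Psi$. Changing variables, bounding $J$ from above on the left and from below on the right, and applying the one-dimensional inequality in $s$ for a.e.\ fixed $\sigma$ gives $\int_{U_\epsilon}\frac{|f|^2}{d^2}\Diff x\leq c_2\int_{\partial U}\int_0^\epsilon\frac{|\tilde f|^2}{s^2}\Diff s\Diff\sigma\leq 4c_2\int_{\partial U}\int_0^\epsilon|\partial_s\tilde f|^2\Diff s\Diff\sigma\leq \tfrac{4c_2}{c_1}\int_{U_\epsilon}|\nabla f|^2\Diff x$, so the claimed constant $C=4c_2/c_1$ is independent of $\epsilon$. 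To legitimise the pointwise trace condition $\tilde f(\sigma,0)=0$, I would first prove the inequality for $f\in C_c^\infty(U)$, which vanish near $\partial U$ and hence satisfy the one-dimensional hypothesis along each normal segment trivially, and then pass to general $f\in H_0^1(U)$ by density, using $H^1$-convergence on the right-hand side and Fatou's lemma (along an a.e.-convergent subsequence) on the left.

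The main obstacle is the geometric step: rigorously constructing the collar coordinates, establishing the two-sided Jacobian bounds, and controlling the regularity of the distance function, all uniformly in $\epsilon$ once $\epsilon$ is small. The Hardy estimate and the density argument are then routine.
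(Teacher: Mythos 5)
Your proof is correct, but it takes a genuinely different route from the paper's. The paper localizes with a partition of unity $\{\psi_1,\psi_2\}$ subordinate to a cover of $\overline{U_\epsilon}$ by a boundary strip $\Gamma_{\epsilon/2}$ and an interior set $B_{\epsilon/4}$: the near-boundary piece $\psi_1 f$ is estimated by quoting the global weighted Sobolev embedding (Hardy inequality on all of $U$, Theorem 8.4 in \cite{Kufner}), the interior piece is bounded trivially by $\frac{16}{\epsilon^2}\norm{f}^2_{L^2(U_\epsilon)}$, and the resulting $\epsilon^{-1}\norm{f}_{L^2(U_\epsilon)}$ losses (from $\nabla\psi_1$ and the interior term) are then absorbed via the separate boundary-layer Poincar\'e inequality $\norm{f}_{L^2(U_\epsilon)}\leq C\epsilon\norm{\nabla f}_{L^2(U_\epsilon)}$ of Lemma \ref{lemma_p}. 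You instead flatten the collar with boundary-normal coordinates and reduce directly to the one-dimensional Hardy inequality, whose constant $4$ is already independent of the interval length, obtaining the explicit constant $C=4c_2/c_1$ from the two-sided Jacobian bounds. Notably, the geometric work you flag as the main obstacle --- the normal map $\Psi(\sigma,s)=\sigma-s\,n(\sigma)$, the $C^2$ regularity of the distance function (via \cite{foote}), and the uniform Jacobian bounds --- is precisely what the paper carries out in its proof of Lemma \ref{lemma_p}; your argument in effect merges that lemma with the Hardy step, dispensing with both the partition of unity and the citation to Kufner's embedding. What each approach buys: yours is self-contained, avoids the absorption step, and gives a clean explicit constant; the paper's keeps the collar geometry quarantined in a quotable auxiliary lemma and plugs into the general weighted-embedding framework, which adapts more readily to other weights and exponents. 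Your closing density argument ($C_c^\infty$ approximation, $H^1$-convergence on the right, Fatou along an a.e.-convergent subsequence on the left) is identical to the paper's. Two minor points to make explicit in a final write-up: the absorption step in the one-dimensional inequality requires $\int_0^\epsilon |g(s)|^2 s^{-2}\Diff s<\infty$ a priori, which is automatic for your $C_c^\infty$ functions since they vanish near $s=0$; and for each fixed $\sigma$ the map $s\mapsto\tilde f(\sigma,s)$ is $C^1$ (as $\Psi$ is $C^1$ when $\partial U$ is $C^2$), so the one-dimensional hypothesis indeed applies along every normal ray.
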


\begin{proof}
First let us define the sets $\Gamma_\epsilon:=\{x\in\bb{R}^n:\dist(x,\partial U)<\epsilon\}$ and $B_\epsilon:=\{x\in U:\dist(x,\partial U)>\epsilon\}$. Note that $\Gamma_\frac{\epsilon}{2}\cup B_\frac{\epsilon}{4}$ forms an open cover of the compact set $\overline{U}$, hence there exists a partition of unity $\{\psi_1,\psi_2\}$ subordinate to this cover such that 
\begin{itemize}
    \item $\psi_1\in C_0^\infty(\Gamma_\frac{\epsilon}{2})$ and $\psi_2\in C_0^\infty(B_\frac{\epsilon}{4})$
    \item $0\leq \psi_i(x)\leq 1$ for all $x\in\Gamma_\frac{\epsilon}{2}\cup B_\frac{\epsilon}{4}$
    \item $\sum_{i=0}^m\psi_i(x)=1$ for all $x\in\overline{U_\epsilon}$.
\end{itemize}
Note that the collection $\{\Gamma_\frac{\epsilon}{2}, B_\frac{\epsilon}{4}\}$ depends on $\epsilon$ hence also the partition of unity $\{\psi_1,\psi_2\}$ depends on $\epsilon$. Hence, considering a standard bump function $\psi\in C_0^\infty(U_\epsilon)$ we see that $\psi_1$ behaves as $\psi(\frac{\cdot}{c\epsilon})$ close to $\partial \Gamma_\frac{\epsilon}{2}$, for some constant $c>0$. Therefore, we have the bounds 
$$\norm{\psi_1}_{L^2(U_\epsilon)}\leq 1\quad\text{and}\quad \norm{\nabla\psi_1}_{L^2(U_\epsilon)}\leq \frac{1}{c\epsilon}.$$
Now let $f\in C_{0}^\infty(U)$ and define $g_i=\psi_i f$.
Hence $g_1\in C_0^\infty(\Gamma_\frac{\epsilon}{2})$ and $g_2\in C_0^\infty(B_\frac{\epsilon}{4})$ and in particular $g_i\in C_0^\infty(U)$. Therefore it follows from the weighted Sobolev embedding for $H_0^1(U)$ that 
$$\left(\int_{U_\epsilon}\frac{|g_1|^2}{\dist(x,\partial U)^2}\Diff x\right)^\frac{1}{2}\leq\left(\int_{U}\frac{|g_1|^2}{\dist(x,\partial U)^2}\Diff x\right)^\frac{1}{2}\leq C_1\norm{g_1}_{H^1(U)}=C_1\norm{g_1}_{H^1(U_\epsilon)}.$$
Here $C_1=2(1+\xi)$ and $\xi>0$ depends only on $\partial U$ (see Theorem 8.4 in \cite{Kufner}).

Since $\dist(B_\frac{\epsilon}{4},\Gamma)>\frac{\epsilon}{4}$ it follows immediately that
$$\int_{U_\epsilon}\frac{|g_2|^2}{\dist(x,\partial U)^2}\Diff x\leq \frac{16}{\epsilon^2}\norm{g_2}_{L^2(U_\epsilon)}^2\leq \frac{16}{\epsilon^2}\norm{f}_{L^2(U_\epsilon)}^2.$$
Hence it follows from $f=g_1+g_2= \psi_1 f+\psi_2 f$ on $U_\epsilon$ and the Minkowski inequality that
\begin{align*}
    \left(\int_{U_\epsilon}\frac{|f|^2}{\dist(x,\Gamma)^2}\Diff x\right)^\frac{1}{2}&\leq \left(\int_{U_\epsilon}\frac{|g_1|^2}{\dist(x,\partial U)^2}\Diff x\right)^\frac{1}{2}+\left(\int_{U_\epsilon}\frac{|g_2|^2}{\dist(x,\partial U)^2}\Diff x\right)^\frac{1}{2}\\
    &\leq C_1\norm{g_1}_{H^1(U_\epsilon)}+\frac{4}{\epsilon}\norm{f}_{L^2(U_\epsilon)}\\
    &\leq C_1\left(\left(\norm{\psi_1}_{L^\infty(U_\epsilon)}+\norm{D\psi_1}_{L^\infty(U_\epsilon)}\right)\norm{f}_{L^2(U_\epsilon)}+\norm{\psi_1}_{L^\infty(U_\epsilon)}\norm{Df}_{L^2(U_\epsilon)}\right)\\
    &\phantom{=}+\frac{4}{\epsilon}\norm{f}_{L^2(U_\epsilon)}\\
    &\leq C_1\left(\left(1+\frac{1}{c\epsilon}\right)\norm{f}_{L^2(U_\epsilon)}+\norm{Df}_{L^2(U_\epsilon)}\right)+\frac{4}{\epsilon}\norm{f}_{L^2(U_\epsilon)}\\
    &\leq\frac{\Tilde{C}}{\epsilon}\norm{f}_{L^2(U_\epsilon)}+\Tilde{C}\norm{Df}_{L^2(U_\epsilon)}\\
    &\leq \Tilde{C}\norm{Df}_{L^2(U_\epsilon)},
\end{align*}
where $\Tilde{C}>0$ only depends on $\partial U$.
For the last step we used Lemma \ref{lemma_p} below. Finally the result follows from the density of $C_{0}^\infty(U)$ in the space $H_{0}^1(U)$. Let $f\in H_0^1(U)$. We take a sequence $(f_n)_{n=1}^\infty\in C_{0}^\infty(U)$ such that $\norm{f_n- f}_{H^1(U)}\to 0$ for $n\to\infty$. Then up to subsequence we can assume that $f_n\to f$ pointwise a.e.~and hence passing to the limit in
$$\int_{U_\epsilon}\frac{|f_n|^2}{\dist(x,\partial U)^2}\Diff x\leq C\int_{U_\epsilon}|\nabla f_n|^2\Diff x,$$ we get

    \begin{align*}
        \int_{U_\epsilon}\frac{|f|^2}{\dist(x,\partial U)^2}\Diff x&=\int_{U_\epsilon}\liminf_{n\to\infty}\frac{|f_n|^2}{\dist(x,\partial U)^2}\Diff x\\
        &\leq \liminf_{n\to\infty}\int_{U_\epsilon}\frac{|f_n|^2}{\dist(x,\partial U)^2}\Diff x\\
        &\leq\liminf_{n\to\infty} C\int_{U_\epsilon}|\nabla f_n|^2\Diff x=C\int_{U_\epsilon}|\nabla f|^2\Diff x.
    \end{align*}

\end{proof}
For the proof above we need a special Poincaré inequality for functions defined on a boundary layer and vanishing on the outer boundary.

\begin{lemma}(\cite{kel3}, Lemma 2.6)
\label{lemma_p}
    Let $\Omega\subset \bb{R}^n$ be a bounded connected open set with $C^2$-boundary $\Gamma:=\partial \Omega$.
    For $\epsilon>0$ let $\Omega_\epsilon=\{x\in \Omega:\dist(x,\Gamma)\leq \epsilon\}$ be the boundary layer of thickness $\epsilon$. Suppose that $u\in H^1_0(\Omega)$, then there exists $\delta>0$ and a constant $C>0$ that only depends on $\Omega,n,\delta$ such that for all $\epsilon < \delta$ 
    $$\norm{u}_{L^2(\Omega_\epsilon)}\leq C\epsilon\norm{\nabla u}_{L^2(\Omega_\epsilon)}.$$
\end{lemma}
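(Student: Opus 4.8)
The plan is to establish the inequality first for functions that are smooth up to the boundary and vanish on $\Gamma$ --- equivalently, by density, for $u\in C_c^\infty(\Omega)$, each of which vanishes in a neighbourhood of $\Gamma$ --- and then to pass to the limit in $H_0^1(\Omega)$. The geometric core of the argument is a change to boundary-normal coordinates in a tubular neighbourhood of $\Gamma$, where the vanishing of $u$ on $\Gamma$ can be exploited fibrewise through the fundamental theorem of calculus.

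First I would fix $\delta>0$ small enough that the nearest-point projection onto $\Gamma$ is well defined and $C^1$ on $\overline{\Omega_\delta}$; this is where the $C^2$ regularity of $\partial\Omega$ enters, as it guarantees that the signed distance function is $C^1$ (indeed $C^2$) on a collar of $\Gamma$ and that the map $\Phi\colon\Gamma\times[0,\delta]\to\overline{\Omega_\delta}$, $\Phi(y,s)=y-s\,n(y)$, with $n$ the outward unit normal, is a $C^1$ diffeomorphism onto its image. Its Jacobian $J(y,s)$ is then continuous and bounded above and below by positive constants depending only on $\Omega$ and $\delta$. In these coordinates the layer $\Omega_\epsilon$ corresponds to $\Gamma\times[0,\epsilon]$ for every $\epsilon\le\delta$, which is what will make the constant uniform in $\epsilon$.

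Next, for $u$ vanishing on $\Gamma$ and $x=\Phi(y,s)$ with $s\le\epsilon$, I would write, using $u(\Phi(y,0))=0$,
$$
u(\Phi(y,s))=\int_0^s \partial_t\bigl(u(\Phi(y,t))\bigr)\Diff t=\int_0^s \nabla u(\Phi(y,t))\cdot\partial_t\Phi(y,t)\Diff t.
$$
Since $|\partial_t\Phi|\le C$, the Cauchy--Schwarz inequality yields $|u(\Phi(y,s))|^2\le C\,\epsilon\int_0^\epsilon|\nabla u(\Phi(y,t))|^2\Diff t$ for all $s\le\epsilon$. Integrating this over $s\in[0,\epsilon]$ and $y\in\Gamma$ against $J$ (bounded above) contributes one factor of $\epsilon$ from the $s$-integration and one from the pointwise bound, while the right-hand side, after reverting the change of variables (using $J$ bounded below), is controlled by $\int_{\Omega_\epsilon}|\nabla u|^2\Diff x$. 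This produces $\int_{\Omega_\epsilon}|u|^2\Diff x\le C\epsilon^2\int_{\Omega_\epsilon}|\nabla u|^2\Diff x$ with $C=C(\Omega,n,\delta)$, i.e.\ the claimed estimate for smooth $u$.

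Finally I would extend to $u\in H_0^1(\Omega)$ by approximating with $u_k\in C_c^\infty(\Omega)$ in $H^1(\Omega)$; since the constant is independent of $k$, and $u_k\to u$ and $\nabla u_k\to\nabla u$ both in $L^2(\Omega_\epsilon)$, the inequality passes directly to the limit, with no need for a Fatou argument. I expect the main obstacle to be the first geometric step: verifying that the boundary-normal coordinate map is a $C^1$ diffeomorphism with Jacobian uniformly comparable to $1$ for all $\epsilon\le\delta$, so that the resulting constant is genuinely uniform in $\epsilon$. This is exactly what the $C^2$ hypothesis on $\Gamma$ secures, via the regularity of the distance function near a $C^2$ hypersurface; once it is in place, the remaining estimates are the routine one-dimensional Hardy--Poincaré computation carried out along the normal fibres.
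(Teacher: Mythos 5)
Your proposal is correct and follows essentially the same route as the paper's proof: boundary-normal coordinates on a collar $\Omega_\delta$ (made available by the $C^2$ regularity of $\Gamma$, with Jacobian bounds uniform in $\epsilon\le\delta$), the fundamental theorem of calculus plus Cauchy--Schwarz along the normal fibres to extract the factor $\epsilon^2$, and a density argument to pass from $C_c^\infty(\Omega)$ to $H_0^1(\Omega)$. The only (harmless) deviation is in the final limit passage, where you use direct $L^2(\Omega_\epsilon)$ norm convergence of $u_k$ and $\nabla u_k$ instead of the paper's pointwise-a.e.\ subsequence and Fatou argument --- both are valid, and yours is marginally cleaner.
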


\begin{proof}
    Since $\Gamma$ is a $C^2$ and compact manifold of dimension $n-1$, there exists $\delta>0$ such that $\Omega_\delta$ satisfies the unique nearest point property, i.e., for each $p\in\Omega_\delta$ there exists a unique $d(p)\in\Gamma$ such that $\dist(p,\Gamma)=\dist(p,d(p))$. Furthermore, the function $\dist:\Omega_\delta\to\bb{R}$ is $C^2$ (for further details see \cite{foote}). Hence, we can consider a coordinate system $\Tilde{x}=(\Tilde{x}_1,\dots,\Tilde{x}_{n-1})$ on $\Gamma$ and extend it to a coordinate system of $\Omega_\delta$ by defining $(\Tilde{x}(p),\Tilde{x}_n(p))=(\Tilde{x}_1(p),\dots,\Tilde{x}_{n-1}(p),\Tilde{x}_n(p))$ where $\Tilde{x}_n(p)=\dist(p,d(p))$ for every $p\in\Omega_\delta$
    Now let $\epsilon<\delta$. By density of $C_0^\infty(\Omega)$ in $H_0^1(\Omega)$ it is enough to show the inequality for $u\in C_0^\infty(\Omega)$.
    With a change of variables

    \begin{align*}
        \int_{\Omega_\epsilon}|u(x)|^2\Diff x&=\int_\Gamma\int_0^\epsilon|u(\Tilde{x},\Tilde{x}_n)|^2|\det J|\Diff \Tilde{x}_n\Diff \Tilde{x}\\
        &\leq \norm{\det J}_{L^\infty(\Omega_\epsilon)}\int_\Gamma\int_0^\epsilon|u(\Tilde{x},\Tilde{x}_n)|^2\Diff \Tilde{x}_n\Diff \Tilde{x}\\
        &\leq \norm{\det J}_{L^\infty(\Omega_\delta)}\int_\Gamma\int_0^\epsilon|u(\Tilde{x},\Tilde{x}_n)|^2\Diff \Tilde{x}_n\Diff \Tilde{x},\\
    \end{align*}
    where $J=\frac{\partial(x_1,\dots,x_n)}{\partial(\Tilde{x}_1,\dots,\Tilde{x}_n)}$ is the Jacobian matrix.
		
Since $u=0$ on $\Gamma$, we have
\begin{align*}
    |u(\Tilde{x},\Tilde{x}_n)|^2&=|u(\Tilde{x},\Tilde{x}_n)-u(\Tilde{x},0)|^2\\
    &=\left|\int_0^{\Tilde{x}_n}\frac{\partial}{\partial \Tilde{x}_n}u(\Tilde{x},s)\Diff s\right|^2\\
    &\leq \epsilon\int_0^\epsilon\left|\frac{\partial}{\partial \Tilde{x}_n}u(\Tilde{x},s)\right|^2\Diff s.
\end{align*}
Hence we can estimate
\begin{align*}
    \int_{\Omega_\epsilon}|u(x)|^2\Diff x&\leq C\int_\Gamma\int_0^\epsilon|u(\Tilde{x},\Tilde{x}_n)|^2\Diff \Tilde{x}_n\Diff \Tilde{x}\\
    &\leq C\int_\Gamma\int_0^\epsilon\epsilon\int_0^\epsilon\left|\frac{\partial}{\partial \Tilde{x}_n}u(\Tilde{x},s)\right|^2\Diff s\Diff \Tilde{x}_n\Diff \Tilde{x}\\
    &\leq C\epsilon^2\int_\Gamma\int_0^\epsilon \left|\frac{\partial}{\partial \Tilde{x}_n}u(\Tilde{x},\Tilde{x}_n)\right|^2\Diff \Tilde{x}_n\Diff \Tilde{x}\\
     &\leq C\epsilon^2\int_\Gamma\int_0^\epsilon \left|\nabla_{\Tilde{x},\Tilde{x}_n}u(\Tilde{x},\Tilde{x}_n)\right|^2\Diff \Tilde{x}_n\Diff \Tilde{x}\\
     &\leq C\epsilon^2\norm{\det J^{-1}}_{L^\infty(\Omega_\delta)}\norm{J}_{L^\infty(\Omega_\delta)}^2\int_{\Omega_\epsilon} \left|\nabla_{x}u(x)\right|^2\Diff x\\
     &\leq C\epsilon^2\int_{\Omega_\epsilon} \left|\nabla_{x}u(x)\right|^2\Diff x,
\end{align*}
where we used  $\left|\frac{\partial}{\partial\Tilde{x}_n}u(\Tilde{x},\Tilde{x}_n)\right|\leq \left|\nabla_{(\Tilde{x},\Tilde{x}_n)}u(\Tilde{x},\Tilde{x}_n)\right|\leq |J|\left|\nabla_x u(x)\right|.$

\end{proof}

\textbf{Acknowledgements.} The authors would like to thank Stefan \v{S}kondri\'{c} and Alessandro Violini for helpful discussions. E.~W.~is supported by the DFG Priority Programme SPP 2410 (project number 525716336).


\printbibliography 
\nocite{*}

\end{document}